\title{Incompatibility of generic hugeness principles}
\author{Monroe Eskew}
\address{Universit\"{a}t Wien \\ Institut f\"{u}r Mathematik \\ Kurt G\"odel Research Center \\ Kolingasse 14-16 \\ 1090 Wien, Austria }
\email{monroe.eskew@univie.ac.at}
\date{}
\newtheorem{theorem}{Theorem}
\newtheorem{lemma}[theorem]{Lemma}
\newtheorem{corollary}[theorem]{Corollary}
\newtheorem{claim}[theorem]{Claim}
\newtheorem{remark}[theorem]{Remark}
\DeclareMathOperator{\col}{Col}
\DeclareMathOperator{\crit}{crit}
\newcommand{\p}{\mathcal{P}}
\newcommand{\la}{\langle}
\newcommand{\ra}{\rangle}
\newcommand{\bbP}{\mathbb{P}}
\newcommand{\bbQ}{\mathbb{Q}}
\newcommand{\bbR}{\mathbb{R}}
\newcommand{\bbE}{\mathbb{E}}
\newcommand{\calA}{\mathcal{A}}
\newcommand{\calF}{\mathcal{F}}
\newcommand{\calU}{\mathcal{U}}
\begin{document}

\thanks{The author wishes to thank the Austrian Science Fund (FWF) for the generous support through grants P34603 and START Y1012-N35 (PI: Vera Fischer).}
\maketitle

\begin{abstract}
We show that the weakest versions of Foreman's minimal generic hugeness axioms cannot hold simultaneously on adjacent cardinals. Moreover, conventional forcing techniques cannot produce a model of one of these axioms.
\end{abstract}

\section{Introduction}
In \cite{potent, GLC, foremanphil, foremanhandbook}, Foreman proposed generic large cardinals as new axioms for mathematics.  These principles are similar to strong kinds of traditional large cardinal axioms but speak directly about small uncountable objects like $\omega_1,\omega_2$, etc.  Because of this, they are able to answer many classical questions that are not settled by ZFC plus traditional large cardinals.  For example, if $\omega_1$ is minimally generically huge, then the Continuum Hypothesis holds and there is a Suslin line \cite{foremanhandbook}.

For a poset $\bbP$, let us say that a cardinal $\kappa$ is \emph{$\bbP$-generically huge} if $\bbP$ forces that there is an elementary embedding $j : V \to M \subseteq V[G]$ with critical point $\kappa$, where $M$ is a transitive class closed under $j(\kappa)$-sequences from $V[G]$.  If $\bbP$ forces that $j(\kappa) = \lambda$, we call $\lambda$ the \emph{target}.
We say that $\kappa$ is \emph{$\bbP$-generically $n$-huge} when the requirement on $M$ is strengthened to closure under $j^n(\kappa)$-sequences (where $j^n$ is the composition of $j$ with itself $n$ times), and we say $\kappa$ is \emph{$\bbP$-generically almost-huge} if the requirement is weakened to closure under ${<}j(\kappa)$-sequences.
We say that a cardinal $\kappa$ is \emph{$\bbP$-generically measurable} if $\bbP$ forces that there is an elementary embedding $j : V \to M \subseteq V[G]$ with critical point $\kappa$, where $M$ is transitive.

If $\kappa$ is the successor of an infinite cardinal $\mu$, 
we say that $\kappa$ is \emph{minimally generically $n$-huge} if it is $\col(\mu,\kappa)$-generically $n$-huge, where $\col(\mu,\kappa)$ is the poset of functions from initial segments of $\mu$ into $\kappa$ ordered by end-extension.
The main result of this note is that for a successor cardinal $\kappa$, it is inconsistent for both $\kappa$ and $\kappa^+$ to be minimally generically huge.
\begin{theorem}
\label{main}
Suppose $0<m\leq n$ and $\kappa$ is a regular cardinal that is $\bbP$-generically $n$-huge with target $\lambda$, where $\bbP$ is nontrivial and strongly $\lambda$-c.c.  Then $\kappa^{+m}$ is not $\bbQ$-generically measurable for any $\kappa$-closed $\bbQ$.
\end{theorem}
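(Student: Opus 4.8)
The plan is to assume both hypotheses and play the two generic embeddings against each other. Fix a $\bbP$-generic $G$ with the huge embedding $j:V\to M\subseteq V[G]$, $\crit(j)=\kappa$, $j(\kappa)=\lambda$, $M^\lambda\cap V[G]\subseteq M$, and a $\bbQ$-generic $H$ with the measurable embedding $i:V\to N\subseteq V[H]$, $\crit(i)=\kappa^+$. The first step, which is clean, is to extract the effect of $i$. Since $\crit(i)=\kappa^+>\kappa$ we have $i(\kappa)=\kappa$ and $i(X)=X$ for every $X\subseteq\kappa$ (as $|X|\le\kappa<\crit(i)$), while elementarity gives $(\kappa^+)^N=i(\kappa^+)>\kappa^+$. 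Thus $N$, and hence $V[H]$, contains a surjection $\kappa\to(\kappa^+)^V$, so $\bbQ$ collapses $\kappa^+$ to $\kappa$; and since $\bbQ$ is $\kappa$-closed it adds no new ${<}\kappa$-sequences, so $\kappa$ stays regular. On the other side, the strong $\lambda$-c.c.\ together with $M^\lambda\cap V[G]\subseteq M$ guarantees that $j$ computes cardinals correctly up to and including $\lambda$, so $\lambda$ is regular and $j(\kappa^+)=\lambda^+$; nontriviality rules out $\lambda=\kappa$.

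The main idea is to transport the hugeness of $\kappa$ through $i$ into $N$ and then amalgamate. By elementarity $N\models$``$\kappa$ is $i(\bbP)$-generically huge to $i(\lambda)$,'' where $i(\bbP)$ is strongly $i(\lambda)$-c.c.\ in $N$. Since $i(\bbP)\in N\subseteq V[H]$, I would force with $i(\bbP)$ over $V[H]$ to obtain $G'$, which is simultaneously $i(\bbP)$-generic over $N$; this yields a huge embedding $j_N:N\to M_N\subseteq N[G']$ with $\crit(j_N)=\kappa$, $j_N(\kappa)=i(\lambda)$, and $M_N^{i(\lambda)}\cap N[G']\subseteq M_N$. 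Composing, $k=j_N\circ i:V\to M_N$ is an elementary embedding with $\crit(k)=\kappa$ and $k(\kappa)=i(\lambda)$; because $i(X)=X$ for $X\subseteq\kappa$, the induced ultrafilter $\{X\subseteq\kappa:\kappa\in k(X)\}$ is $V$-generic for the huge ideal associated to $\bbP$, so $k$ is, from the point of view of $V$, a $\bbP$-generic huge embedding now living inside $V[H][G']$. The goal is then to read off from $k$ a combinatorial invariant of $\kappa^+$ that this huge structure must preserve (for instance a stationary subset of $\kappa^+\cap\cof(\kappa)$, or the regularity of a suitable cofinality), and to observe that the $\bbQ$-collapse of $\kappa^+$ recorded in the first step destroys it, the desired contradiction.

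The hard part will be twofold. The two generic extensions are mutually hostile: $\bbP$ collapses $\kappa$ (it does so in the motivating minimal case), so one cannot simply force with $\bbP\times\bbQ$ and must instead route everything through $i$ and the reflected poset $i(\bbP)$ as above; getting both embeddings into the single model $V[H][G']$ is exactly what this maneuver buys. The genuinely delicate point, however, is that $M_N$ is only closed under $i(\lambda)$-sequences coming from $N[G']$, whereas the collapse of $\kappa^+$ lives in the larger model $V[H][G']$, so I must argue that the relevant witness (the surjection, or the destroyed stationary set) is already captured by $M_N$, or else compare the closure of $M_N$ inside $V[H][G']$ directly. This is where the strong $\lambda$-c.c.\ hypothesis does its work: it forces $i(\lambda)$ and $k(\kappa^+)$ to be computed correctly and pins down the cofinality structure around $\kappa^+$, so that the invariant extracted from $k$ cannot be reconciled with $\bbQ$ having collapsed $\kappa^+$ by $\kappa$-closed forcing. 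Making this incompatibility precise, namely identifying the invariant and verifying that it is simultaneously $k$-preserved and $\bbQ$-destroyed, is the crux of the argument.
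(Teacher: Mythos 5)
Your proposal correctly extracts the easy consequences (that $\bbQ$ must collapse $\kappa^+$ to $\kappa$, that the relevant cardinals are computed correctly), but it is a plan rather than a proof, and the plan stalls exactly where you say it does: you never identify the contradiction. The configuration you reach --- an elementary embedding $k = j_N \circ i : V \to M_N$ with critical point $\kappa$ living in $V[H][G']$ --- is not contradictory by itself; it is just another generic-hugeness-like situation, of the very kind the hypotheses assert to exist. There is no reason to expect that a stationary set or a cofinality computed from $k$ is ``$k$-preserved but $\bbQ$-destroyed,'' and indeed the paper's proof uses nothing of that sort. A symptom of the problem is that in your sketch the strong $\lambda$-c.c.\ and the nontriviality of $\bbP$ never do any real work (you use ``nontrivial'' only to rule out $\lambda = \kappa$, which already follows from $j(\kappa)=\lambda$ and $\crit(j)=\kappa$); but these are exactly the hypotheses whose removal is known to be impossible, since Foreman and Shioya produced consistent configurations violating the conclusion when they are dropped.

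The paper's argument runs the reflection in the opposite direction, and the direction matters. Both hypotheses enter through Usuba's theorem: a nontrivial \emph{strongly $\lambda$-c.c.}\ forcing followed by a $\lambda$-closed forcing has the $\lambda$-approximation property, and the order (chain condition first, closure second) is essential. The paper applies the huge embedding $j$ to $\bbQ$: by elementarity $\lambda^+$ is $j(\bbQ)$-generically measurable in $M$, and by the closure $M^{\lambda} \cap V[G] \subseteq M$ the poset $j(\bbQ)$ is $\lambda$-closed \emph{in $V[G]$}, so $\bbP * j(\dot\bbQ)$ is precisely of the form Usuba's theorem requires. Forcing with $j(\bbQ)$ over $V[G]$ yields an embedding $i$ with critical point $\delta = \lambda^+$; applying $i$ to a sequence $\la X_\alpha : \alpha < \delta \ra$ of sets (chosen in $M$, using $\p(\lambda)^V \subseteq M$ and the preliminary claims $\kappa^{<\kappa}=\kappa$, $\lambda^{<\lambda}=\lambda$) each approximated by $\calF = \p_\lambda(\lambda)^V$ and coding surjections $\lambda \to \alpha$, one gets a set $X'_\delta$ approximated by $i(\calF)=\calF$ which therefore lies in $V$ by the approximation property --- yet it codes a surjection from $\lambda$ onto $(\lambda^+)^V$, a contradiction. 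Your routing through $i(\bbP)$ composes the posets as $\bbQ * i(\bbP)$, i.e.\ closed forcing first and chain-condition forcing second, where no approximation property is available; this is a concrete structural obstruction to completing your approach, beyond the unexpressed-invariant gap. (There is also a secondary issue you share with the paper but do not address: statements like ``$\kappa$ is $\bbP$-generically huge'' quantify over class embeddings and cannot be moved through $i$ by elementarity as stated; the paper handles the analogous point on the measurability side by passing to embeddings of $H_\theta$.)
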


Here, ``nontrivial'' means that forcing with $\bbP$ necessarily adds a new set.
Usuba \cite{usubaapprox} introduced the \emph{strong $\lambda$-chain condition} (strong $\lambda$-c.c.), which means that $\bbP$ has no antichain of size $\lambda$ and forcing with $\bbP$ does not add branches to $\lambda$-Suslin trees.  As Usuba observed, $\bbP$ having the strong $\lambda$-c.c.\ is implied by $\bbP$ having the $\mu$-c.c.\ for $\mu<\lambda$ and by $\bbP \times \bbP$ having the $\lambda$-c.c.  In particular, if $\theta = \kappa^{<\mu}$, then $\col(\mu,\kappa)$ collapses $\theta$ to $\mu$ and is strongly $\theta^+$-c.c. Let us also remark that in Theorem \ref{main}, $\kappa$-closure can be weakened to $\kappa$-strategic-closure without change to the arguments.

Regarding the history: Woodin proved, in unpublished work mentioned in \cite[p.\ 1126]{foremanhandbook}, that it is inconsistent for $\omega_1$ to be minimally generically 3-huge while $\omega_3$ is minimally generically 1-huge.  Subsequently, the author \cite{MR4092254} improved this to show the inconsistency of a successor cardinal $\kappa$ being minimally generically $n$-huge while $\kappa^{+m}$ is minimally generically almost-huge, where $0 < m < n$.  The weakening of the hypothesis to $\kappa$ being only generically 1-huge uses an idea from the author's work with Cox \cite{2020arXiv200914245C}.

In contrast to Theorem \ref{main}, Foreman \cite{foremanmore} exhibited a model where for all $n>0$, $\omega_n$ is $\bbP$-generically almost-huge with target $\omega_{n+1}$ for some $\omega_{n-1}$-closed, strongly $\omega_{n+1}$-c.c.\ poset $\bbP$.  A simplified construction was given by Shioya \cite{MR4159767}.  

We prove Theorem \ref{main} in \S\ref{eva} via a generalization that is less elegant to state.  In \S\ref{nogo}, we discuss what is known about the consistency of generic hugeness by itself and present a corollary of Theorem \ref{main} showing that the usual forcing strategies cannot produce models where $\omega_1$ is generically huge with target $\omega_2$ by a strongly $\omega_2$-c.c.\ poset.
Our notations and terminology are standard.  We assume the reader is familiar with the basics of forcing and elementary embeddings.

\section{Generic huge embeddings and approximation}
\label{eva}

The relevance of the strong $\kappa$-c.c.\ is its connection to the approximation property of Hamkins \cite{hamkins}.  Suppose $\calF \subseteq \p(\lambda)$.  We say that a set $X \subseteq \lambda$ is \emph{approximated by $\calF$} when $X \cap z \in \calF$ for all $z \in \calF$.  If $V \subseteq W$ are models of set theory, then we say that the pair $(V,W)$ satisfies the \emph{$\kappa$-approximation property} for a $V$-cardinal $\kappa$ when for all $\lambda \in V$ and all $X \subseteq \lambda$ in $W$, if $X$ is approximated by $\p_\kappa(\lambda)^V$, then $X \in V$.  We say that a forcing $\bbP$ \emph{has the $\kappa$-approximation property} when the $\kappa$-approximation property is forced to hold of the pair $(V,V[G])$.
The following result appears as Lemma 1.5 and Note 1.11 in \cite{usubaapprox}:
 
\begin{theorem}[Usuba]
If $\bbP$ is a nontrivial $\kappa$-c.c.\ forcing and $\dot{\bbQ}$ is a $\bbP$-name for a $\kappa$-closed forcing, then $\bbP * \dot{\bbQ}$ has the $\kappa$-approximation property if and only if $\bbP$ has the strong $\kappa$-c.c.
\end{theorem}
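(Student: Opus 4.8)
The plan is to prove both implications through the single combinatorial theme that a counterexample to the $\kappa$-approximation property is essentially a \emph{new branch through a $\kappa$-Suslin tree}, and that the role of the $\kappa$-closed $\dot\bbQ$ is to confine any such branch to the $\bbP$-extension. Throughout I would first reduce a putative counterexample to a canonical form: if $X\subseteq\lambda$ lies in $V[G*H]$, is approximated by $\p_\kappa(\lambda)^V$, and is not in $V$, then taking $\lambda$ minimal forces $X$ to be \emph{fresh}, i.e.\ $X\cap\alpha\in V$ for every $\alpha<\lambda$, since any $X\cap\alpha\notin V$ would itself be an approximated counterexample at a smaller ordinal (for $z\in\p_\kappa(\alpha)^V$ one has $(X\cap\alpha)\cap z=X\cap z\in V$). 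I would also record that, as $\dot\bbQ$ is $\kappa$-closed over $V[G]$, it adds no new small subsets, so the branching of the approximations is already visible in $V[G]$.

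For the direction that failure of the strong $\kappa$-c.c.\ destroys the approximation property, I would argue contrapositively. Assuming $\bbP$ is $\kappa$-c.c.\ but not strongly so, by definition $\bbP$ adds a cofinal branch $b$ to some $\kappa$-Suslin tree $T\in V$. Viewing $b$ as a set of nodes, its restriction below any level $\eta<\kappa$ is determined by the single node of $b$ at level $\eta$ and hence lies in $V$; consequently, for any $z\in\p_\kappa(T)^V$, whose nodes have levels bounded by some $\eta<\kappa$ since $\kappa$ is regular, we get $b\cap z=(b\restriction\eta)\cap z\in V$. Thus $b$ is approximated by $\p_\kappa(T)^V$, while $b\notin V$ because a $\kappa$-Suslin tree has no cofinal branch in $V$. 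So $b$ already witnesses the failure of the $\kappa$-approximation property for $\bbP$, and a fortiori for $\bbP*\dot\bbQ$.

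For the forward direction I would assume $\bbP$ has the strong $\kappa$-c.c.\ and derive a contradiction from a fresh, approximated $X\notin V$. After standard reductions controlling $\cf^V(\lambda)$, the aim is to build in $V$ a tree $T$ of height $\kappa$ whose nodes are the possible values of the small approximations of $\dot X$, ordered by end-extension, so that the genuine $X$ traces a cofinal branch $b\notin V$. Two things then need to be shown: (i) that $b$ already lies in $V[G]$, so $\bbP$ alone adds it, and (ii) that $T$, or a suitable subtree, is genuinely $\kappa$-Suslin in $V$. For (ii) the mechanism is that incomparable candidate values are forced by incompatible conditions; once the branch is in hand one converts an antichain of nodes, or an oversized level, into an antichain of $\bbP$-conditions of equal size---if $p,p'$ force incomparable nodes both onto $b$ then $p\perp p'$---so the $\kappa$-c.c.\ of $\bbP$ bounds the levels and antichains below $\kappa$. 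For (i) I would invoke that a $\kappa$-closed forcing adds no cofinal branch to a tree of height $\kappa$ with levels of size $<\kappa$; this is where the $\kappa$-closure of $\dot\bbQ$ enters, and for successor $\kappa$ it reduces to the standard splitting argument pitting $2^{|\eta|}\ge\kappa$ incompatible extensions against a level of size $<\kappa$. Combining (i) and (ii) makes $\bbP$ add a cofinal branch to a $\kappa$-Suslin tree of $V$, contradicting the strong $\kappa$-c.c.

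The main obstacle is the circular dependence between (i) and (ii): the Suslin-ness in (ii) is proved \emph{using} the branch, whereas pulling the branch down to $V[G]$ in (i) presupposes that the relevant tree already has small levels. The delicate case is when the tree of $\bbP$-possible values still carries a level of size $\ge\kappa$, so that $\dot\bbQ$ appears to be adding the branch on its own; this is precisely the configuration realized by $\bbP$ trivial together with $\dot\bbQ=\add(\kappa,1)$, which shows that the hypothesis that $\bbP$ is \emph{nontrivial} is indispensable. I would resolve this by passing to the subtree of nodes forced onto the branch and arguing that, under nontriviality and the strong $\kappa$-c.c., the freshness of $X$ over $V$---and not merely over $V[G]$---forces enough of the branching to be decided by $G$ rather than by $H$, thereby thinning $T$ to a $\kappa$-Suslin tree in $V$ and exhibiting the branch in $V[G]$ at the same time. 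Making this extraction simultaneous rather than sequential is the crux of the argument.
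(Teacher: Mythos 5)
The paper does not actually prove this theorem---it is quoted from Usuba's work---so there is no internal argument to compare yours against, and I can only judge the proposal on its merits. Your backward direction is correct: if $\bbP$ is $\kappa$-c.c.\ but adds a cofinal branch $b$ to a $\kappa$-Suslin tree $T\in V$, then regularity of $\kappa$ bounds the levels met by any $z\in\p_\kappa(T)^V$, so $b\cap z$ is computed from a single node of $b$ and lies in $V$; hence $b$ is approximated, new, and persists into the $\bbP*\dot\bbQ$ extension, and coding $T$ onto an ordinal gives a violation of the $\kappa$-approximation property.

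The forward direction, however, has a genuine gap, which you flag yourself but do not close. The crux is exactly the circularity you describe: the level-counting argument (distinct candidate values are forced by incompatible conditions) is valid only for $\bbP$-names, since $\bbP*\dot\bbQ$ is not $\kappa$-c.c.; but $X$ is known only to be a $\bbP*\dot\bbQ$-set until one has a small-leveled tree, while your tool for pulling the branch down to $V[G]$ presupposes small levels. Your proposed resolution---that freshness of $X$ over $V$ ``forces enough of the branching to be decided by $G$ rather than by $H$''---is a restatement of what must be proved, not an argument; in particular it never exhibits the mechanism by which nontriviality of $\bbP$ enters, even though you correctly observe (via the $\add(\kappa,1)$ example) that some such mechanism is indispensable. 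Beyond this central gap there are concrete false or unjustified steps. First, the lemma underlying your step (i)---that a $\kappa$-closed forcing adds no cofinal branch to a tree of height $\kappa$ with levels of size $<\kappa$---fails when $\kappa$ is inaccessible: $\add(\kappa,1)$ is $\kappa$-closed and adds a cofinal branch to ${}^{<\kappa}2$, whose levels then have size $<\kappa$; your splitting argument needs some $\mu<\kappa$ with $2^\mu\ge\kappa$, so it covers only non-inaccessible $\kappa$, while the theorem concerns arbitrary regular $\kappa$. Second, even granting small levels, your antichain computation shows only that the thinned tree has no antichains of size $\kappa$ in $V$; it does not exclude cofinal branches of that tree lying in $V$, and a tree with such a branch is not $\kappa$-Suslin, so the strong $\kappa$-c.c.\ cannot yet be invoked---eliminating ground-model branches is a separate, nontrivial pruning argument. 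Third, the ``standard reductions controlling $\cf^V(\lambda)$'' are doing real work that is left unspecified: a $\kappa$-Suslin tree must have height $\kappa$, whereas the tree of initial segments of a fresh $X\subseteq\lambda$ has height $\lambda$ (or $\cf(\lambda)$ after restricting to a cofinal set of levels). When $\cf^V(\lambda)>\kappa$, every $z\in\p_\kappa(\lambda)^V$ is bounded, so freshness alone already yields approximation and no tree of height $\kappa$ is available at all; when $\cf(\lambda)<\kappa$ one needs a different, non-tree argument (for instance, first using $\kappa$-distributivity of $\bbQ$ to get $X\in V[G]$, then a separating-set argument from the $\kappa$-c.c.\ of $\bbP$). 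These cases are where the actual proof lives, and the proposal does not supply them.
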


Theorem \ref{main} will follow from the more general lemma below.

\begin{lemma}
\label{killa}
The following hypotheses are jointly inconsistent:
\begin{enumerate}
\item $\kappa_0\leq\kappa_1$ and $\lambda_0\leq\lambda_1$ are regular cardinals.
\item\label{genhuge} $\bbP$ is a nontrivial strongly $\lambda_0$-c.c.\ poset that forces an elementary embedding $j : V \to M \subseteq V[G]$ with $j(\kappa_0) = \lambda_0$, $j(\kappa_1) = \lambda_1$, $\p(\lambda_1)^V \subseteq M$, and $M^{<\lambda_0} \cap V[G] \subseteq M$.
\item\label{genmeas} $\kappa_1^+$ is $\bbQ$-generically measurable for a $\kappa_0$-closed $\bbQ$.
\end{enumerate}
\end{lemma}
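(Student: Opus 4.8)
The plan is to derive a contradiction by manufacturing, in a common forcing extension, a highly complete ultrafilter that the approximation property is forced to reflect downward, where it becomes a measurable successor and is therefore impossible. The two driving consequences of the hypotheses are an approximation property coming from (\ref{genhuge}) and a complete ultrafilter coming from (\ref{genmeas}).

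First I would set up the approximation property. Since $\bbP$ is nontrivial and strongly $\lambda_0$-c.c., I want to apply Usuba's theorem to $\bbP * j(\dot{\bbQ})$. The point is that $j(\bbQ)$, which is $\lambda_0$-closed in $M$ by elementarity (as $j(\kappa_0)=\lambda_0$), is in fact $\lambda_0$-closed in $V[G]$: any descending sequence of length ${<}\lambda_0$ through $j(\bbQ)$ lying in $V[G]$ already lies in $M$ by the closure $M^{<\lambda_0}\cap V[G]\subseteq M$, and $M$ supplies a lower bound. Thus $j(\dot{\bbQ})$ is a $\bbP$-name for a $\lambda_0$-closed forcing, and Usuba's theorem yields that $\bbP * j(\dot{\bbQ})$ has the $\lambda_0$-approximation property. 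Next I would apply $j$ to (\ref{genmeas}): inside $M$, the cardinal $\lambda_1^+ = j(\kappa_1^+)$ is $j(\bbQ)$-generically measurable, and $\lambda_1^+=(\lambda_1^+)^M=(\lambda_1^+)^V$ because $\p(\lambda_1)^V\subseteq M$ and $\bbP$ is $\lambda_0$-c.c.

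Second, I would force. Let $G * K$ be $\bbP * j(\dot{\bbQ})$-generic, so $K$ is $j(\bbQ)$-generic over $V[G]$, hence over $M$. By the transferred instance of (\ref{genmeas}) there is in $M[K]$ an embedding $k_M : M \to N$ with $\crit(k_M)=\lambda_1^+$, and I let $W = \{ X \in \p(\lambda_1^+)^M : \lambda_1^+ \in k_M(X)\}$ be the induced $M$-normal, $M$-$\lambda_1^+$-complete ultrafilter; it lives in $V[G][K]$. The key computation is that $W$ is approximated by $V$ at $\lambda_0$: given $z \in V$ with $z \subseteq \p(\lambda_1^+)^M$ and $|z| < \lambda_0$, its enumeration is a ${<}\lambda_0$-sequence of elements of $M$ and so lies in $M$; by $\lambda_1^+$-completeness the set $Y = \bigcap\{X \in z : X \in W\} \cap \bigcap\{\lambda_1^+ \setminus X : X \in z,\ X \notin W\}$ belongs to $W$, and for any $\xi \in Y$ we get $W \cap z = \{X \in z : \xi \in X\}$, which is computed from $z$ and the ordinal $\xi$ and hence lies in $V$. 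Applying the $\lambda_0$-approximation property of $\bbP * j(\dot{\bbQ})$ should then deposit $W$ — or the appropriate $V$-coded reduct of it — into $V$. Since $\bbP$ is $\lambda_0$-c.c.\ and $\lambda_0\le\lambda_1$ (with equality in Theorem \ref{main}), $\lambda_1^+$ is a preserved successor cardinal in $V[G]\supseteq V$, and $M$ carries an Ulam matrix on $\lambda_1^+$; running the Ulam argument against $W$ inside $V[G]$, the induced regressive choice function $\lambda_1^+ \to \lambda_1$ cannot be injective, yielding two disjoint sets in $W$ and the desired contradiction.

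The hard part will be the reconciliation of the domain of $W$ with $V$ in the descent step. The ultrafilter $W$ naturally measures $\p(\lambda_1^+)^M$, and since the closure of $M$ reaches only ${<}\lambda_0$-sequences while $\lambda_1^+ > \lambda_0$, one has $\p(\lambda_1^+)^V \not\subseteq M$; thus $W$ cannot literally be viewed as a subset of a ground-model set, and deciding which $V$-subsets of $\lambda_1^+$ lie in $M$ is itself $G$-dependent, so the approximation property cannot be quoted verbatim. The crux is therefore to isolate the precise $V$-coded object to which the $\lambda_0$-approximation property of $\bbP * j(\dot{\bbQ})$ genuinely applies — threading the completeness, available only at the target $\lambda_1^+$, through the approximation scale $\lambda_0$ across the tower $V \subseteq M \subseteq V[G] \subseteq V[G][K]$ — so that what descends really is a $V$-$\lambda_1^+$-complete ultrafilter and hence contradicts the nonexistence of measurable successor cardinals.
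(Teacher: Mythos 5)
Your setup matches the paper's opening moves: applying Usuba's theorem to $\bbP * j(\dot\bbQ)$ (with $j(\bbQ)$ seen to be $\lambda_0$-closed in $V[G]$ via the closure of $M$), transferring the generic measurability hypothesis to $M$ by elementarity, and forcing with $j(\bbQ)$ over $V[G]$. Your completeness computation showing $W \cap z = \{X \in z : \xi \in X\}$ for $z \in M$ of size ${<}\lambda_0$ is also correct as far as it goes. But the difficulty you flag in your final paragraph is not a loose end to be tidied up; it is fatal to this route. The $\lambda_0$-approximation property applies only to sets $X$ that are subsets of some set lying in $V$ and are approximated by the corresponding $\p_{\lambda_0}(\cdot)^V$; your $W$ is a subset of $\p(\lambda_1^+)^M$, which is not in $V$ and is itself $G$-dependent, so there is literally no instance of the approximation property to quote, and the trace $W \cap \p(\lambda_1^+)^V$ cannot be shown to be in $V$ either, since deciding which $V$-sets belong to $M$ depends on $G$. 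Worse, the hypotheses give only $\p(\lambda_1)^V \subseteq M$, never $\p(\lambda_1^+)^V \subseteq M$, so $W$ simply does not measure all $V$-subsets of $\lambda_1^+$: even a successful descent could not yield ``a $V$-$\lambda_1^+$-complete ultrafilter.'' The Ulam argument then has nowhere to run: a matrix chosen in $M$ has its entries measured by $W$ but the induced choice function $\eta \mapsto \xi(\eta)$ exists only in $V[G][K]$, where $(\lambda_1^+)^V$ need not remain a cardinal (as $j(\bbQ)$ is merely $\lambda_0$-closed); a matrix chosen in $V$ has entries that need not lie in $M$ and hence are not measured at all.

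The paper's proof avoids the ultrafilter entirely, and the object it descends is the missing idea: a single subset of $\lambda_1$ (not of $\lambda_1^+$) coding a surjection from $\lambda_1$ onto $\delta = (\lambda_1^+)^V$. After two cardinal-arithmetic claims ($\kappa_1^{<\kappa_0}=\kappa_1$, proved from the generic measurability hypothesis, and $\lambda_1^{<\lambda_0}=\lambda_1$), the set $\calF = \p_{\lambda_0}(\lambda_1)^V$ is coded by a single subset of $\lambda_1$ in $V$ and hence is an element of $M$. Every $X \in \p(\lambda_1)^V$ is trivially approximated by $\calF$, and since $\p(\lambda_1)^V \subseteq M$ --- this is where that hypothesis does essential work, whereas you use it only to identify $(\lambda_1^+)^M$ with $(\lambda_1^+)^V$ --- one can choose, working inside $M$, a sequence $\la X_\alpha : \alpha < \delta \ra$ of $\calF$-approximated subsets of $\lambda_1$ with $X_\alpha$ coding a surjection $\lambda_1 \to \alpha$. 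Applying the second generic embedding $i$ (critical point $\delta$) to this sequence, elementarity --- rather than completeness --- certifies that $X'_\delta := i(\la X_\alpha : \alpha < \delta\ra)(\delta)$ is approximated by $i(\calF) = \calF$, while coding a surjection from $\lambda_1$ onto $\delta$. Since $X'_\delta \subseteq \lambda_1$ and $\lambda_1 \in V$, the approximation property of $\bbP * j(\dot\bbQ)$ now applies verbatim and puts $X'_\delta \in V$, contradicting the fact that $\delta$ is a cardinal of $V$. This design is forced on the argument: by Foreman's consistency result for generic almost-hugeness (see the paper's Remark), any correct proof must collapse exactly when $\p(\lambda_1)^V \subseteq M$ is dropped, and it is the selection of the approximated witnesses $X_\alpha$ inside $M$ that plays this role.
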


\begin{proof}
We will need a first-order version of (\ref{genmeas}) that can be carried through the embedding of (\ref{genhuge}).  Replace it by the (possibly weaker) hypothesis that $\bbQ$ is a $\kappa_0$-closed poset and for some $\theta \gg \lambda_1$, $\bbQ$ forces an elementary embedding $j : H_\theta^V \to N$ with critical point $\kappa_1^+$, where $N \in V^{\bbQ}$ is a transitive set. 

\begin{claim}
\label{gch1}
$\kappa_1^{<\kappa_0} = \kappa_1$.
\end{claim}
\begin{proof}
Let $G \subseteq \bbQ$ be generic over $V$, and let $j : H_\theta^V \to N$ be an elementary embedding with critical point $\kappa_1^+$, where $N \in V[G]$ is a transitive set.  By ${<}\kappa_0$-distributivity, $\p_{\kappa_0}(\kappa_1)^{N} \subseteq\p_{\kappa_0}(\kappa_1)^{V}$, so the cardinality of $\p_{\kappa_0}(\kappa_1)^V$ must be below the critical point of $j$.
\end{proof}

\begin{claim}
\label{gch2}
$\lambda_1^{<\lambda_0} = \lambda_1$.
\end{claim}
\begin{proof}
Let $G \subseteq \bbP$ be generic over $V$, and let $j : V \to M$ be as hypothesized in (\ref{genhuge}).  
By the closure of $M$, $\p_{\lambda_0}(\lambda_1)^M = \p_{\lambda_0}(\lambda_1)^{V[G]}$.
By elementarity and Claim \ref{gch1}, $M \models \lambda_1^{<\lambda_0} = \lambda_1$.  Thus $M$ has a surjection $f : \lambda_1 \to \p_{\lambda_0}(\lambda_1)^{V[G]} \supseteq \p_{\lambda_0}(\lambda_1)^V$.  If $\lambda_1^{<\lambda_0} > \lambda_1$ in $V$, then $f$ would witnesses a collapse of $\lambda_1^+$, contrary to the $\lambda_0$-c.c.
\end{proof}

Now let $\calF = \p_{\lambda_0}(\lambda_1)^V$.  Let $j : V \to M \subseteq V[G]$ be as in hypothesis (\ref{genhuge}).  Claim \ref{gch2} implies that $\calF$ is coded by a single subset of $\lambda_1$ in $V$, so $\calF \in M$.  In $M$, let $\calA$ be the collection of subsets of $\lambda_1$ that are approximated by $\calF$.  Since $\p(\lambda_1)^V \subseteq M$, it is clear that $\p(\lambda_1)^V \subseteq \calA$.

For each $\alpha<\lambda_1^+$, there exists an $X \in \calA \cap V$ that codes a surjection from $\lambda_1$ to $\alpha$ in some canonical way.  Working in $M$, choose for each $\alpha<\lambda_1^+$ an $X_\alpha\in\calA$ that codes a surjection from $\lambda_1$ to $\alpha$.

By elementarity, $\lambda_1^+$ is $j(\bbQ)$-generically measurable in $M$, witnessed by generic embeddings with domain $H^M_{j(\theta)}$.  By the closure of $M$, $j(\bbQ)$ is $\lambda_0$-closed in $V[G]$.  Let $H \subseteq j(\bbQ)$ be generic over $V[G]$.  Let $i : H^M_{j(\theta)} \to N \in M[H]\subseteq V[G][H]$ be given by the $j(\bbQ)$-generic measurability of $\lambda_1^+$ in $M$, with $\crit(i) = \delta = \lambda_1^+$.

Let $\la X'_\alpha : \alpha < i(\delta) \ra = i(\la X_\alpha : \alpha < \delta \ra)$.  By elementarity, $X'_\delta$ is approximated by $i(\calF) = \calF$.
Since $\bbP * j(\dot\bbQ)$ is a nontrivial strongly $\lambda_0$-c.c.\ forcing followed by a $\lambda_0$-closed forcing, it has the $\lambda_0$-approximation property by Usuba's Theorem.  
Therefore, $X'_\delta \in V$.
But this is a contradiction, since $X'_\delta$ codes a surjection from $\lambda_1$ to $(\lambda_1^+)^V$.
\end{proof}

Let us now complete the proof of Theorem \ref{main}.  
Suppose $n\geq 1$, $\kappa<\lambda$, $\bbP$ is strongly $\lambda$-c.c., and $\bbP$ forces an embedding $j : V \to M \subseteq V[G]$ such that $j(\kappa) = \lambda$ and $M$ is closed under $j^n(\kappa)$-sequences from $V[G]$.
By the $\lambda$-c.c.\ of $\bbP$ and the $\lambda$-closure of $M$, $(\lambda^+)^M = (\lambda^+)^V$.  Suppose inductively that $i<n$ and $(\lambda^{+i})^M = (\lambda^{+i})^V \leq j^{i+1}(\kappa)$.  Again, by the chain condition and the $j^{i+1}(\kappa)$-closure of $M$, $(\lambda^{+i+1})^M = (\lambda^{+i+1})^V$.  
Since $\kappa^{+i}<\lambda^{+i} = j(\kappa^{+i})$, $j(\lambda^{+i})$ must be an $M$-cardinal greater than $\lambda^{+i}$, so $\lambda^{+i+1} \leq j(\lambda^{+i})$.  By elementarity applied to the induction hypothesis, $j(\lambda^{+i}) \leq j^{i+2}(\kappa)$.  Thus the induction hypothesis carries through up to $n$.  Now suppose $0<m\leq n$ and set $\kappa_0=\kappa$, $\lambda_0 = \lambda$, $\kappa_1 = \kappa^{+m-1}$, and $\lambda_1 = \lambda_0^{+m-1}$.  Then we have $j(\kappa_0)=\lambda_0$ and $j(\kappa_1) = \lambda_1 \leq j^n(\kappa)$. If $\kappa^{+m}$ is also generically measurable by a $\kappa$-closed forcing, then this assignment of variables satisfies the hypotheses of the lemma, which we have shown to be inconsistent.

\begin{remark}
Suppose $\omega_1$ is $\bbP$-generically almost-huge and $\omega_2$ is $\bbQ$-generically measurable, where $\bbP$ is strongly $\omega_2$-c.c.\ and $\bbQ$ is countably closed.  This holds, for example, in Foreman's model \cite{foremanmore}.
Let $j : V \to M$ be an embedding witnessing the $\bbP$-generic almost-hugeness of $\omega_1$.  Put $\kappa_0=\kappa_1=\omega_1$ and $\lambda_0=\lambda_1=\omega_2$.  The only hypothesis of Lemma \ref{killa} that fails is $\p(\omega_2)^V \subseteq M$.
\end{remark}

\section{On the consistency of generic hugeness}
\label{nogo}

It is not known whether any successor cardinal can be minimally generically huge.  Moreover, it is not known whether $\omega_1$ can be $\bbP$-generically huge with target $\omega_2$ for an $\omega_2$-c.c.\ forcing $\bbP$.  But we do not think that Theorem \ref{main} is evidence that this hypothesis by itself is inconsistent, since there are other versions of generic hugeness for $\omega_1$ that satisfy the hypothesis of Theorem \ref{main} and are known to be consistent relative to huge cardinals.  Magidor \cite{MR526312} showed that if there is a huge cardinal, then in a generic extension, $\omega_1$ is $\bbP$-generically huge with target $\omega_3$, where $\bbP$ is strongly $\omega_3$-c.c.  Shioya \cite{MR4159767} observed that if $\kappa$ is huge with target $\lambda$, then Magidor's result can be obtained from a two-step iteration of Easton collapses, $\bbE(\omega,\kappa) * \dot\bbE(\kappa^+,\lambda)$.  An easier argument shows that after the first step of the iteration, or even in the extension by the Levy collapse $\col(\omega,{<}\kappa)$, $\omega_1$ is $\bbP$-generically huge with target $\lambda$ by a strongly $\lambda$-c.c.\ forcing $\bbP$.

Theorem \ref{main} shows that in these models, $\omega_2$ is not $\bbQ$-generically measurable for a countably closed $\bbQ$.
It also shows that if it is consistent for $\omega_1$ to be generically huge with target $\omega_2$ by a strongly $\omega_2$-c.c.\ forcing, then this cannot be demonstrated by a standard method resembling Magidor's:

\begin{corollary}
Suppose $\kappa$ is a huge cardinal with target $\lambda$.  Suppose $\bbP$ is such that:
\begin{enumerate}
\item $\bbP$ is $\lambda$-c.c.\ and contained in $V_\lambda$.
\item $\bbP$ preserves $\kappa$ and collapses $\lambda$ to become $\kappa^+$.
\item\label{decomp} For all sufficiently large $\alpha<\lambda$ (for example, all Mahlo $\alpha$ beyond a certain point), $\bbP \cong (\bbP \cap V_\alpha) * \dot\bbQ_\alpha$, where $\dot\bbQ_\alpha$ is forced to be $\kappa$-closed.
\end{enumerate}
Then in any generic extension by $\bbP$, $\kappa$ is not generically huge with target $\lambda$ by a strongly $\lambda$-c.c.\ forcing.

Furthermore, suppose $\lambda$ is supercompact in $V$, and (\ref{decomp}) is strengthened to:
\begin{enumerate}
\setcounter{enumi}{3}
\item\label{sc} For all sufficiently large $\alpha<\beta<\lambda$, $\bbP \cong (\bbP \cap V_\alpha) * \dot\col(\kappa,\beta)* \dot\bbQ_{\alpha,\beta}$, where $\dot\bbQ_ {\alpha,\beta}$ is forced to be $\kappa$-closed.
\end{enumerate}
Then $\kappa$ is not generically huge with target $\lambda$ by a strongly $\lambda$-c.c.\ forcing in any $\lambda$-directed-closed forcing extension of $V^{\bbP}$.
\end{corollary}

\begin{proof}
Let $j : V \to M$ witness that $\kappa$ is huge with target $\lambda$.  By elementarity and the fact that $\p(\lambda) \subseteq M$, $\lambda$ is measurable in $V$.  
Let $\calU$ be a normal ultrafilter on $\lambda$, and let $i : V \to N$ be the ultrapower embedding.

Since the decomposition of (\ref{decomp}) holds for all ``sufficiently large'' $\alpha$, $N \models i(\bbP) \cong \bbP * \dot\bbQ$, where $\dot\bbQ$ is forced to be $\kappa$-closed.  By the closure of $N$, $V$ also believes that $\dot\bbQ$ is forced by $\bbP$ to be $\kappa$-closed.  Thus if we take $G \subseteq \bbP$ generic over $V$, then the embedding $i$ can be lifted by forcing with $\bbQ$.  This means that in $V[G]$, $\lambda$ is $\bbQ$-generically measurable, $\bbQ$ is $\kappa$-closed, and $\lambda = \kappa^+$.  Theorem \ref{main} implies that in $V[G]$, $\kappa$ cannot be generically huge with target $\lambda$ by a strongly $\lambda$-c.c.\ forcing.

For the final claim, suppose $\lambda$ is supercompact in $V$, and let $\dot\bbR$ be a $\bbP$-name for a $\lambda$-directed-closed forcing.
Let $\gamma$ be such that $\Vdash_{\bbP} |\dot\bbR| \leq \gamma$. By \cite[Theorem 14.1]{MR2768691}, $\col(\kappa,\gamma) \cong \col(\kappa,\gamma) \times \bbR$ in $V^{\bbP}$.
Let $i : V \to N$ be an elementary embedding such that $\crit(i) = \lambda$, $i(\lambda) > \gamma$, and $N^\gamma \subseteq N$.
By applying (\ref{sc}) in $N$, there is in $N$ a complete embedding of $\bbP * \dot\bbR$ into $i(\bbP)$, such that the quotient forcing is equivalent to something of the form $\col(\kappa,\gamma)*\dot\bbQ_{\lambda,\gamma}$, where $\dot\bbQ_{\lambda,\gamma}$ is forced to be $\kappa$-closed in $N^{\bbP*\dot\bbR * \dot\col(\kappa,\gamma)}$.  By the closure of $N$, the quotient is forced to be $\kappa$-closed in $V^{\bbP*\dot\bbR}$.

Let $G * H \subseteq \bbP*\dot\bbR$ be generic.  Further $\kappa$-closed forcing yields a generic $G' \subseteq i(\bbP)$ that projects to $G*H$.  We can lift the embedding to $i : V[G] \to N[G']$.  By elementarity, $i(\bbR)$ is $i(\lambda)$-directed-closed in $N[G']$.  Thus $i[H]$ has a lower bound $r \in i(\bbR)$.  By the closure of $N$, $i(\bbR)$ is at least $\kappa$-closed in $V[G']$.   Forcing below $r$ yields a generic $H' \subseteq i(\bbR)$ and a lifted embedding $i : V[G*H] \to N[G'*H']$.  
Hence in $V[G*H]$, $\lambda$ is generically measurable via a $\kappa$-closed forcing.  Theorem \ref{main} implies that $\kappa$ cannot be generically huge with target $\lambda$ by a strongly $\lambda$-c.c.\ forcing.
\end{proof}

\bibliographystyle{amsplain}
\bibliography{genhugebib}

\providecommand{\bysame}{\leavevmode\hbox to3em{\hrulefill}\thinspace}
\providecommand{\MR}{\relax\ifhmode\unskip\space\fi MR }
\providecommand{\MRhref}[2]{%
  \href{http://www.ams.org/mathscinet-getitem?mr=#1}{#2}
}
\providecommand{\href}[2]{#2}
\begin{thebibliography}{10}

\bibitem{2020arXiv200914245C}
Sean {Cox} and Monroe {Eskew}, \emph{{Compactness versus hugeness at successor
  cardinals}}, arXiv e-prints (2020), arXiv:2009.14245.

\bibitem{MR2768691}
James Cummings, \emph{Iterated forcing and elementary embeddings}, Handbook of
  set theory. {V}ols. 1, 2, 3, Springer, Dordrecht, 2010, pp.~775--883.
  \MR{2768691}

\bibitem{MR4092254}
Monroe Eskew, \emph{Generic large cardinals as axioms}, Rev. Symb. Log.
  \textbf{13} (2020), no.~2, 375--387. \MR{4092254}

\bibitem{foremanmore}
Matthew Foreman, \emph{More saturated ideals}, Cabal seminar 79-81 (Berlin)
  (A.~S. Kechris, D.~A. Martin, and Y.~N. Maschovakis, eds.), Lec. Notes in
  Math., vol. 1019, Springer-Verlag, 1983, pp.~1--27.

\bibitem{potent}
\bysame, \emph{Potent axioms}, Trans. Amer. Math. Soc. \textbf{294} (1986),
  no.~1, 1--28.

\bibitem{GLC}
\bysame, \emph{Generic large cardinals: new axioms for mathematics?}, Doc.
  Math. \textbf{II (extra)} (1998), 11--21.

\bibitem{foremanphil}
\bysame, \emph{Has the continuum hypothesis been settled?}, Lect. Notes Log.
  \textbf{24} (2006), 56--75.

\bibitem{foremanhandbook}
\bysame, \emph{Ideals and generic elementary embeddings}, Handbook of set
  theory (Matthew Foreman and Akihiro Kanamori, eds.), vol.~2, Springer,
  Dordrecht, 2010, pp.~885--1147.

\bibitem{hamkins}
Joel~David Hamkins, \emph{Extensions with the approximation and cover
  properties have no new large cardinals}, Fund. Math. \textbf{180} (2003),
  no.~3, 257--277.

\bibitem{MR526312}
Menachem Magidor, \emph{On the existence of nonregular ultrafilters and the
  cardinality of ultrapowers}, Trans. Amer. Math. Soc. \textbf{249} (1979),
  no.~1, 97--111. \MR{526312}

\bibitem{MR4159767}
Masahiro Shioya, \emph{Easton collapses and a strongly saturated filter}, Arch.
  Math. Logic \textbf{59} (2020), no.~7-8, 1027--1036. \MR{4159767}

\bibitem{usubaapprox}
Toshimichi Usuba, \emph{The approximation property and the chain condition},
  Research Institute for Mathematical Sciences Kokyuroku \textbf{1895} (2014),
  103--107.

\end{thebibliography}
\end{document}